\documentclass[a4paper,10pt,reqno, english]{amsart}  
\usepackage[utf8]{inputenc}
\usepackage[T1]{fontenc}
\usepackage{amsmath,amsthm}
\usepackage{amsfonts,amssymb,enumerate}
\usepackage{url,paralist}
\usepackage{mathtools}  
\usepackage[colorlinks=true,urlcolor=blue,linkcolor=red,citecolor=magenta]{hyperref}
\usepackage{enumerate}
\usepackage{anysize}

\theoremstyle{plain}
\newtheorem{theorem}{Theorem}[section]
\newtheorem{lemma}[theorem]{Lemma}

\newtheorem{corollary}[theorem]{Corollary}

\newtheorem{conjecture}[theorem]{Conjecture}
\newtheorem*{theorem*}{Theorem}

\newtheorem*{claim*}{Claim}

\theoremstyle{definition}

\newtheorem{remark}[theorem]{Remark}

\newcommand{\R}{\mathbb{R}}
\newcommand{\Z}{\mathbb{Z}}
\newcommand{\F}{\mathbb{F}}
\newcommand{\E}{\mathrm{E}}  
\newcommand{\B}{\mathrm{B}}
\newcommand{\dist}{\mathrm{dist}}
\newcommand\Sym{\mathfrak S}

\begin{document}

\title[Barycenters and Topological Tverberg via Constraints]
{Barycenters of Polytope Skeleta and Counterexamples to the Topological Tverberg Conjecture, via Constraints}
 
\author[Blagojevi\'c]{Pavle V. M. Blagojevi\'{c}} 
\thanks{P.B.\ was supported by the DFG
        Collaborative Research Center TRR~109 ``Discretization in Geometry and Dynamics''
        and by grant ON 174024, Serbian Ministry of Education and Science.}
\address[PVMB]{Mat. Institut SANU, Knez Mihailova 36, 11001 Beograd, Serbia\hfill\break
\mbox{\hspace{4mm}}Inst. Math., FU Berlin, Arnimallee 2, 14195 Berlin, Germany}
\email{blagojevic@math.fu-berlin.de} 
\author[Frick]{Florian Frick}
\thanks{F.F.\ was supported by DFG via the Berlin Mathematical School.}
\address[FF]{Dept.\ Math. Sciences, Carnegie Mellon University, Pittsburgh, PA 15213, USA}
\email{frick@cmu.edu} 
\author[Ziegler]{G\"unter M. Ziegler} 
\thanks{G.M.Z.\ received funding from the ERC project  
no. 247029-\emph{SDModels} and by DFG via the Research Training Group Methods for Discrete Structures}  
\address[GMZ]{Inst. Math., FU Berlin, Arnimallee 2, 14195 Berlin, Germany} 
\email{ziegler@math.fu-berlin.de}

\date{October 27, 2015; revised April 4, 2016 and February 14, 2017}

\dedicatory{Dedicated to the memory of Ji\v{r}\'{\i} Matou\v{s}ek}

\maketitle


\begin{abstract}
Using the authors' 2014 ``constraint method,''
we give a short proof for a 2015 result of Dobbins on representations of a point in a polytope as
the barycenter of points in a skeleton, and
show that the ``$r$-fold Whitney trick'' of Mabillard and Wagner (2014/2015)
implies that the Topological Tverberg Conjecture for $r$-fold intersections
fails dramatically for all $r$ that are not prime powers.
\end{abstract}
 
\section{Introduction}
\label{sec : Introduction}

The \emph{Topological Tverberg Theorem} states that for any $d\ge1$, prime power $r\ge2$, and $N:=(r-1)(d+1)$
every continuous map of the simplex $\Delta_N$ to $\R^d$ identifies points from $r$ vertex-disjoint faces of $\Delta_N$.
This result was established for prime $r$ by B{\'a}r{\'a}ny, Shlosman \& Sz{\H{u}}cs \cite{Barany1981} in 1981,
and for prime powers $r$ in famous unpublished work by \"Ozaydin \cite{Oezaydin1987} from 1987. 
The belief that the result should be equally valid for values $r\ge6$ that are not prime powers,
known as the \emph{Topological Tverberg Conjecture},
was left open, as “a holy grail of topological combinatorics (Kalai \cite{Kalai2015-blog}).

The ``constraint method,'' introduced by the authors in 2014 \cite{Blagojevic2014}, shows that the 1981/1987
Topological Tverberg Theorem implies virtually all subsequent extensions and sharpenings
that were previously viewed as substantial independent results, such as the ``Colored Tverberg Theorem'' of \v{Z}ivaljevi\'c \& Vre\'cica \cite{Zivaljevic1992} or the ``Generalized Van Kampen--Flores Theorem'' of Sarkaria \cite{Sarkaria1991-1} and Volovikov \cite{Volovikov1996-2}. 
In this latter case, this implication had already been observed by Gromov \cite{Gromov2010} in 2010.

In this paper, we first demonstrate the mechanism and the power of the constraint method outside the classical ``Tverberg type theorems,'' by a simple proof of a 2015 \emph{Inventiones} result of Dobbins \cite{Dobbins2014}.
(This was the content of our research announcement \cite{Blagojevic2014-2}.)

More importantly, we get a new quality by showing that the constraint method not only yields substantial consequences
by ``using the Topological Tverberg Theorem'': It also allows us to derive from recent deep work of Mabillard and Wagner the failure of the Topological Tverberg Conjecture for all $r\ge6$ that are not prime powers.

Mabillard and Wagner announced their work on the ``$r$-fold Whitney trick'' at the \emph{SoCG} conference in June 2014 \cite{Mabillard2014} with the explicit intention of deriving counterexamples to the Topological Tverberg Conjecture by combining it with a result of \"Ozaydin~\cite{Oezaydin1987}.
At that time, it appeared that due to the codimension condition implicit in both the classical and the $r$-fold Whitney trick, their work could not yield the desired counterexamples to the Topological Tverberg Conjecture.
This was reiterated in lectures by Wagner in Copenhagen (November 2014) and by Mabillard in Berlin (January 2015).
Thus the announcement of counterexamples by the second author in February 2015 in Oberwolfach~\cite{Frick2015}
came as a surprise (as documented e.g.\ on the Kalai blog~\cite{Kalai2015-blog}). 
Now, after the release of the full journal version of the Mabillard--Wagner work on ArXiv in August 2015~\cite{Mabillard2015}, we here present our short and simple proof that their work implies the failure of the Topological Tverberg Conjecture, based on the research announcement~\cite{Frick2015}, in Section~\ref{sec:TTCfails}.
Finally we comment on the degree of failure of the Topological Tverberg Conjecture, and pose a new conjecture, in Section~\ref{sec:failure}.

\section{Constraints, an example}

The main idea and the power of the so called ``constraint method'' becomes apparent in the following brief proof of a result by Dobbins from 2015.

\begin{theorem}[Dobbins {\cite[Thm.\,1]{Dobbins2014}}]
\label{thm:dobbins}
	Let $P$ be a $d$-dimensional polytope, $p \in P$, and let 
	$k\ge1$ and $r\ge1$ be integers with $kr \ge d$. 
	Then there are points $p_1,\dots , p_r$ in the $k$-skeleton 
	$P^{(k)}$ of~$P$ with barycenter \[p = \tfrac{1}{r}(p_1 + \dots +p_r).\]
\end{theorem}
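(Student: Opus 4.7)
The plan is to apply the constraint method of \cite{Blagojevic2014} in order to deduce Dobbins's theorem from the topological Tverberg theorem, with the $k$-skeleton condition encoded as a distance-to-subcomplex constraint.

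First, lift the problem from $P$ to a simplex. Choose $N$ large enough (to be fixed by the constraint method) and an affine surjection $\pi : \Delta_N \to P$ sending each vertex of $\Delta_N$ to a vertex of $P$, with each vertex of $P$ hit at least once. Let $K \subseteq \Delta_N$ be the subcomplex of those faces $\sigma$ whose vertex images under $\pi$ lie in the vertex set of some common face of $P$ of dimension at most $k$; then $\pi(K) \subseteq P^{(k)}$. The continuous function $\rho(x) := \dist(x, K)$ vanishes precisely on $K$. The problem becomes: find $x_1, \dots, x_r$ in pairwise vertex-disjoint faces of $\Delta_N$, each lying in $K$, with $\pi\bigl(\tfrac{1}{r}\sum_i x_i\bigr) = p$; then $p_i := \pi(x_i) \in P^{(k)}$ have barycenter $p$.

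Second, apply topological Tverberg to the combined map
\[
    \Phi := (\pi - p) \times \rho : \Delta_N \longrightarrow \R^d \times \R
\]
for $N := (r-1)(d+2)$. After first reducing to the case of $r$ a prime power by a standard amalgamation argument, topological Tverberg yields a partition $\sigma_1, \dots, \sigma_r$ and points $x_i \in \sigma_i$ such that $\Phi(x_i)$ is a common value $(z,t) \in \R^d \times \R$. The $k$-skeleton conclusion is then extracted by the standard constraint-method pigeonhole on the scalar component: one argues that $t = 0$, so that each $x_i \in K$ and each $p_i := \pi(x_i) \in P^{(k)}$.

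The delicate step, and the one where I expect the real work to lie, is that topological Tverberg only guarantees the existence of \emph{some} common intersection value and does not a priori pin down the first-coordinate value $z$ to $0$; yet $z = 0$ is precisely what is needed to force $\tfrac{1}{r}\sum p_i = p$. Moreover, the hypothesis $rk \ge d$ is substantially weaker than the threshold $k \ge (r-1)d/r$ at which the naive pigeonhole of the constraint method normally operates, so the closure of the argument cannot be purely combinatorial. I would close the gap by choosing $\pi$ and the join decomposition $\Delta_N = F_1 \ast \cdots \ast F_r$ cleverly, so that each $F_i$ contains a preimage of $p$ under $\pi$ and the symmetry of the Tverberg conclusion forces $z = 0$. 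This prescription of the Tverberg intersection value — rather than the encoding of the $k$-skeleton condition, which is by now routine — is the main obstacle and the place where the short proof must acquire its ingenuity.
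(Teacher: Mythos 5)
Your reduction to the Topological Tverberg Theorem cannot be repaired, because a Tverberg-type conclusion is the wrong kind of statement for this problem. Applying the Topological Tverberg Theorem to $\Phi=(\pi-p)\times\rho$ produces points $x_1,\dots,x_r$ in disjoint faces with \emph{equal} images, i.e.\ $\pi(x_1)=\dots=\pi(x_r)$; it says nothing about the \emph{average} of the $\pi(x_i)$. Forcing the first coordinate $z$ to be $0$, as you propose, would give $\pi(x_i)=p$ for every $i$, and combined with $t=0$ this would put $p$ itself into $P^{(k)}$ --- which is false whenever $p$ lies, say, in the interior of $P$ and $k<d$. So the ``delicate step'' you flag is not a gap to be closed by a clever choice of $\pi$ or of a join decomposition (note also that disjoint faces of $\Delta_N$ need not respect any fixed decomposition $F_1*\dots*F_r$): no choice can convert a coincidence of values into a prescribed barycenter. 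The quantitative mismatch you noticed ($rk\ge d$ versus the constraint-method threshold $k\ge\lceil\tfrac{r-1}{r}d\rceil$) is a symptom of this structural problem, not an independent technicality.

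The paper's proof instead hard-wires the barycenter condition into the configuration space and only then runs the constraint mechanism. After translating so that $p=0$ lies in the relative interior of (a face of) $P$, one takes $W_r=\{(x_1,\dots,x_r)\in\R^r:\sum x_i=0\}$ and the polytope $C=P^r\cap W_r^{\oplus d}$ of dimension $(r-1)d$, whose points are exactly the $r$-tuples in $P$ with barycenter $p$. For prime $r$ one restricts to the $(r-1)$-skeleton $C^{(r-1)}$, which is $(r-2)$-connected; a face dimension count (each face of $C$ is $(\sigma_1\times\dots\times\sigma_r)\cap W_r^{\oplus d}$, of dimension at least $r(k+1)-d\ge r$ if all $\dim\sigma_i\ge k+1$, using exactly $rk\ge d$) shows that every point of $C^{(r-1)}$ already has some coordinate in $P^{(k)}$. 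Then the $\Sym_r$-equivariant map $\psi\colon C^{(r-1)}\to W_r$ sending $(x_1,\dots,x_r)$ to the vector of deviations of $\dist(x_i,P^{(k)})$ from their mean has a zero by Dold's theorem (applied to the free $\Z/r$-action on $W_r\setminus\{0\}$, since $\dim W_r=r-1$ and $C^{(r-1)}$ is $(r-2)$-connected); at a zero all distances are equal, one of them is $0$, hence all $p_i\in P^{(k)}$, and $\sum p_i=0$ because $C\subseteq W_r^{\oplus d}$. The general case follows by induction on the number of prime factors of $r$, nesting barycenters of barycenters --- a reduction that works for the Dobbins statement but has no analogue for Tverberg-type coincidence statements. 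Thus the shared ingredient with your plan is only the distance-to-skeleton constraint function and the pigeonhole; the essential missing idea in your proposal is to replace the simplex-plus-Tverberg setup by the polytope $C=P^r\cap W_r^{\oplus d}$ itself.
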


\begin{proof} 
	We can assume that $p = 0$, and that $p$ is in the interior of~$P$: 
	Otherwise we could restrict to a proper face of~$P$ with the 
	origin in its relative interior. 
	Let first $r$ be prime. 
	Consider the linear space $W_r = \{(x_1, \dots, x_r) \in \R^r : \sum_{i=1}^r x_i = 0\}$ and its $d$-fold 
	direct sum $W_r^{\oplus d} \subseteq \R^{d\times r}$ that is a subspace of codimension~$d$ inside $\R^{d\times r}$. 
	Then $C = P^r \cap W_r^{\oplus d}$ is a polytope of dimension~$(r-1)d$. 
	The $(r-1)$-skeleton $C^{(r-1)}$ of~$C$ is homotopy equivalent to a wedge of $(r-1)$-spheres and thus is $(r-2)$-connected.
	
	Let $(x_1, \dots, x_r) \in C^{(r-1)}$, then at least one $x_i$ lies in~$P^{(k)}$: 
	Suppose for contradiction that $x_i \notin P^{(k)}$ for all $i = 1, \dots, r$. 
	For each $x_i$ let $\sigma_i$ be the inclusion-minimal face of $P$ with $x_i \in \sigma_i$. 
	Consequently, $\dim \sigma_i \ge k+1$ for all $i = 1, \dots, r$.
	Each face of~$C$ is of the form $(\tau_1 \times \dots \times \tau_r) \cap W_r^{\oplus d}$ with the $\tau_i$ faces of~$P$. 
	The point $(x_1, \dots, x_r)$ lies in the face $(\sigma_1 \times \dots \times \sigma_r) \cap W_r^{\oplus d}$ but in no proper subface. 
	Now 
	\[
	\dim \left((\sigma_1 \times \dots \times \sigma_r) \cap W_r^{\oplus d}\right) \ge r(k+1)-d\ge r.
	\]
	Thus, $(\sigma_1 \times \dots \times \sigma_r) \cap W_r^{\oplus d}\notin C^{(r-1)}$, 
	which is a contradiction.

	Consider the constraint
	 function $\psi\colon  C^{(r-1)} \longrightarrow W_r$ defined by
	\[
	 (x_1, \dots, x_r) \longmapsto (\dist(x_1, P^{(k)})-\tfrac1rD, \dots, \dist(x_r, P^{(k)})-\tfrac1rD),
	\]
	where $D=\sum_{i=1}^r \dist(x_i, P^{(k)})$.
	If the action of the symmetric group $\Sym_r$ is given on $P^r$ and $\R^r$ by the permutation of factors in the product then the subspaces $C^{(r-1)}$ and $W_r$ are invariant and inherit the action.
	The continuous function $\psi$ is $\Sym_r$-equivariant.
    The vector space~$W_r$ is $(r-1)$-dimensional and $C^{(r-1)}$ is $(r-2)$-connected, 
	so $\psi$ has a zero by a theorem of Dold \cite[Thm.\,6.2.6]{Matousek2008} applied to the subgroup~$\Z/r$ of~$\Sym_r$, which acts freely on $W_r {\setminus} \{0\}$. 
	Let $(p_1, \dots, p_r) \in C^{(r-1)}$ with $\psi(p_1, \dots, p_r) = 0$. 
	This is equivalent to $\dist(p_1, P^{(k)}) = \dots = \dist(p_r, P^{(k)})$. 
	Since, there is an index~$i$ such that $p_i \in P^{(k)}$ it follows that $\dist(p_i,P^{(k)}) = 0$. 
	Consequently, all $p_j$ satisfy $\dist(p_j,P^{(k)}) = 0$ and hence are in $P^{(k)}$.
	Since $C^{(r-1)}\subseteq W_r^{\oplus d}$ we have $p_1 + \dots + p_r = 0$.
	
	The case for general $r$ follows by a simple induction with respect to the number of prime divisors, as in~\cite{Dobbins2014}: 
	Suppose $r = q_1 \cdots q_t$ with $q_i$ prime and the theorem holds for any number $r$ that is a product of at most $t-1$ primes. 
	Let $m = q_2 \cdots q_t$.
	Since $m\cdot q_1k = rk \ge d$, there are $m$ points $x_1, \dots,x_m$ in $P^{(q_1k)}$ with $p = \frac{1}{m}(x_1+\dots+x_m)$. 
	Each $x_i$ is contained in a $(q_1k)$-face $\sigma_i$ of~$P$. 
	Thus, there are $q_1$ points $y_i^{(1)}, \dots, y_i^{(q_1)}$ in the $k$-skeleton $\sigma_i^{(k)}$ of~$\sigma_i$ with $x_i = \frac{1}{q_1}(y_i^{(1)}+\dots+y_i^{(q_1)})$.
	In particular, the $y_i^{(j)}$ are also contained in~$P^{(k)}$ and $p = \frac{1}{m} \sum_{i=1}^m \frac{1}{q_1} \sum_{j=1}^{q_1} y_i^{(j)} = \frac{1}{r} \sum_{i=1}^m \sum_{j=1}^{q_1} y_i^{(j)}$.	
	\end{proof}
	
\section{Three Main Ingredients}\label{sec:GeneralizedVKF}

Imre B\'ar\'any, in a 1976 letter to Helge Tverberg, asked whether Tverberg's classical 1966 theorem~\cite{Tverberg1966} 
would generalize from affine maps to continuous maps, that is, whether 
for any continuous map $f\colon\Delta_N\longrightarrow~\R^d$
with $N=(r-1)(d+1)$ there are
$r$ points from disjoint faces of the $N$-dimensional simplex that map to the same point in~$\R^d$.
The problem was posed by Tverberg at an Oberwolfach conference in May 1978, and first appeared in print as \cite[Problem~84]{Gruber1979} in~1979.
For $r=2$ this had already been established by Bajm\'oczy and B\'ar\'any \cite{Bajmoczy1979}.
In 1981 B\'ar\'any, Shlosman and Sz\H ucs answered B\'ar\'any's question positively for the case of primes, and later in 1987 \"Ozaydin for the case of prime powers.
This result is known as the \emph{Topological Tverberg Theorem}.

\begin{theorem}[B\'ar\'any, Shlosman and Sz\H ucs \cite{Barany1981}, \"Ozaydin \cite{Oezaydin1987}]
	\label{thm:topological_Tverberg}
	Let $d\geq 1$ be an integer, let $r$ be a power of prime, and $N=(r-1)(d+1)$.
	 Then for any continuous map $f\colon \Delta_N \longrightarrow \R^d$ there are $r$ pairwise disjoint faces $\sigma_1, \dots, \sigma_r$ of the simplex $\Delta_N$ with $f(\sigma_1) \cap \dots \cap f(\sigma_r) \ne\emptyset$.
\end{theorem}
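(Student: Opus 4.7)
The plan is to use the classical configuration space / test map scheme in the spirit of Sarkaria's deleted join formulation.

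First I would set up the deleted join $K := (\Delta_N)^{*r}_\Delta \cong [r]^{*(N+1)}$, which parametrises formal sums $\sum_{i=1}^r t_i x_i$ with the $x_i$ lying in pairwise disjoint faces of $\Delta_N$. This is a pure $N$-dimensional complex with a natural $\Sym_r$-action, and it is $(N-1)$-connected (homotopy equivalent to a wedge of $(r-1)^{N+1}$ spheres of dimension $N$). A continuous map $f\colon \Delta_N \to \R^d$ extends to the $\Sym_r$-equivariant ``join map''
\[
\tilde{f}\colon K \longrightarrow (\R^d)^{*r}, \qquad \sum t_i x_i \longmapsto \sum t_i f(x_i),
\]
and a Tverberg partition for $f$ is exactly a preimage under $\tilde f$ of the thin diagonal $D = \{\tfrac{1}{r}(y,\dots,y): y\in\R^d\} \subseteq (\R^d)^{*r}$.

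Next I would reduce the problem to a sphere question. The complement $(\R^d)^{*r} \setminus D$ equivariantly deformation retracts onto the unit sphere in $W_r^{\oplus(d+1)}$, where $W_r = \{x\in\R^r : x_1 + \dots + x_r = 0\}$ carries the standard $\Sym_r$-representation of dimension $r-1$; this sphere has dimension $N-1$. Assuming for a contradiction that no Tverberg partition exists, $\tilde f$ yields a $\Sym_r$-equivariant map
\[
\Phi\colon K \longrightarrow S(W_r^{\oplus(d+1)}) \cong S^{N-1}.
\]

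Finally I would rule out such a $\Phi$ using the prime-power hypothesis. For $r$ prime, restrict the action to $\Z/r \leq \Sym_r$ by cyclic shift, which acts freely on $W_r\setminus\{0\}$, and apply a Dold-type Borsuk--Ulam theorem to the $(N-1)$-connected complex $K$ mapping to a space with free $\Z/r$-action whose mod-$r$ cohomology is concentrated in degrees $\le N-1$ (as invoked already in the proof of Theorem~\ref{thm:dobbins}). For $r = p^k$ a prime power, restrict to $G = (\Z/p)^k \leq \Sym_r$ embedded via the regular representation; this is \"Ozaydin's crucial device, and it ensures that $G$ still acts freely on $W_r\setminus\{0\}$ because the only $G$-invariant vector in $\R^r$ lies on the diagonal. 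A cohomological obstruction computation via the Fadell--Husseini index, equivalently via the equivariant Euler class of $W_r^{\oplus(d+1)}$ in $H^{N}_G(\mathrm{pt};\F_p)$, then produces the contradiction.

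The main obstacle is this final step. The dimensions are exactly balanced -- an $(N-1)$-connected $N$-complex mapping to the $(N-1)$-sphere -- so there is no slack and a naive connectivity argument cannot succeed; one genuinely needs a free action of a sufficiently large elementary abelian $p$-subgroup of $\Sym_r$ on $W_r\setminus\{0\}$ in order to produce a nonzero equivariant Euler class. Such a free action is available precisely when $r$ is a prime power, and it is unavailable otherwise, which is exactly the starting point for the counterexamples constructed in the remainder of the paper.
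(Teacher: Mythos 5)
Your setup is the standard Sarkaria deleted-join version of exactly the scheme that the paper sketches: the paper does not actually prove Theorem~\ref{thm:topological_Tverberg} (it is cited from B\'ar\'any--Shlosman--Sz\H ucs and \"Ozaydin), and its two-line sketch uses the deleted \emph{product} $(\Delta_N)^{\times r}_{\Delta(2)}$ with target $S(W_r^{\oplus d})$, while you use the deleted join $[r]^{*(N+1)}$ with target $S(W_r^{\oplus(d+1)})$; these are interchangeable, and your prime case via Dold is correct. One small correction there: Dold's theorem does handle the ``no slack'' situation --- an $(N-1)$-connected source mapping to a free $G$-space of dimension $\le N-1$ is exactly its hypothesis --- so for prime $r$ the connectivity argument does succeed as is; the genuine obstacle in the prime-power case is not the dimension count but the absence of a free action.

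And that is where your proposal contains a real error: for $r=p^k$ with $k\ge 2$, the group $G=(\Z/p)^k$ embedded in $\Sym_r$ via the regular representation does \emph{not} act freely on $W_r\setminus\{0\}$. The absence of nonzero $G$-invariant vectors only gives fixed-point freeness, i.e.\ $(W_r)^G=0$; freeness would require every nontrivial element to act without nonzero fixed vectors, and this fails: for example for $r=4$, $G=(\Z/2)^2$, any involution $g\in G$ fixes the nonzero vectors of $W_4$ that are constant on the two cosets of $\langle g\rangle$. So your sentence justifying freeness conflates the two notions, and no Dold-type theorem can be applied to $G$ in the prime-power case. Your fallback --- the Fadell--Husseini index / equivariant Euler class of $W_r^{\oplus(d+1)}$ --- is indeed the right tool, precisely because it needs only $\bigl(W_r^{\oplus(d+1)}\bigr)^G=0$, but then the entire content of \"Ozaydin's prime-power argument is the nonvanishing of $e\bigl(W_r^{\oplus(d+1)}\bigr)$ in $H^{N}(\B G;\F_p)$ (equivalently, the corollary of the Localization Theorem quoted in the paper), and in your write-up this crucial fact is asserted rather than justified. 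With ``freely'' replaced by ``without nonzero fixed vectors'' and with an argument (localization, or the explicit product-of-Bocksteins computation in the polynomial part of $H^*(\B G;\F_p)$) for the Euler class being nonzero, your outline becomes a correct proof of the cited theorem.
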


This result for prime $r$ follows from the connectivity of the deleted product configuration spaces $(\Delta_N)^{\times r}_{\Delta(2)}$ 
and freeness of the $\Z/r$-action on the sphere $S(W_r^{\oplus d})$.
The prime power case $r=p^k$ is due to the fixed-point free action of the elementary abelian group $G=(\Z/p)^k$ on the sphere $S(W_r^{\oplus d})$
together with a corollary to the Localization Theorem for elementary abelian groups (cf.\ \cite[Thm.\,III.3.8]{tomDieck:TransformationGroups}):
For any finite-dimensional $G$-CW complex $X$ the following are equivalent,  
\begin{enumerate}[(i)]
	\item The fixed-point set $X^G$ is non-empty,
	\item The map  $H^*(\B G;\F_p)\longrightarrow H^*(\E G\times_GX;\F_p)$,
	  induced by the projection $\E G\times_GX\longrightarrow\B G$, is injective. 
\end{enumerate}

The \emph{Topological Tverberg Conjecture} asserts an affirmative answer to B\'ar\'any's question, that is that 
Theorem~\ref{thm:topological_Tverberg} remains true outside of the prime power case.
It was widely believed to be true and that $r$ having only one prime divisor was simply an artifact of the proof method; 
see for example Matou\v sek~\cite[p.~162]{Matousek2008}: ``It seems likely that this theorem remains true for all~$p$.'' 
It was the insight and foresight of Mabillard and Wagner to work 
against the grain of the field and develop their ``$r$-fold Whitney trick.''
They envisioned that this could be combined with the work of Özaydin~\cite{Oezaydin1987} to construct counterexamples to the
Topological Tverberg Conjecture. In fact, only one more ingredient is missing that when all combined yield the desired counterexamples.
Here we collect these three main ingredients:
\smallskip

\noindent
\textbf{Ingredient 1: Topological Tverberg implies Generalized Van Kampen--Flores.} 
The following Generalized Van Kampen--Flores Theorem turns out to be an easy corollary of the Topological Tverberg Theorem, 
if we use the constraint method \cite[Thm.\,6.3]{Blagojevic2014};
this was apparently first pointed out by Gromov \cite[Sect.~2.9c]{Gromov2010}, 
whose sketch can be seen as a first instance of the constraint method ``at work.''
The Generalized Van Kampen--Flores Theorem was originally obtained,
with significantly more involved proofs, by Sarkaria for primes and by Volovikov for prime powers.

\begin{theorem}[Sarkaria \cite{Sarkaria1991-1}, Volovikov \cite{Volovikov1996-2}]
	\label{thm:genrelized_van_Kampen_Flores}
	Let $d\ge1$ be an integer, let $r$ be a power of a prime, set $N =(r-1)(d+2)$, and let $k\ge\lceil \tfrac{r-1}rd\rceil$. 
	 Then for any continuous map $f\colon \Delta_N \longrightarrow \R^d$ there are $r$ pairwise disjoint faces $\sigma_1, \dots, \sigma_r$ of the $k$-th skeleton $\Delta_N^{(k)}$ with $f(\sigma_1) \cap \dots \cap f(\sigma_r) \ne\emptyset$.
\end{theorem}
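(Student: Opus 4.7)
The plan is to use the constraint method with a single auxiliary coordinate $\dist(\,\cdot\,, \Delta_N^{(k)})$, applying the Topological Tverberg Theorem (Theorem~\ref{thm:topological_Tverberg}) not to $f$ itself but to an augmented map into one dimension higher. The combinatorics line up cleanly because $N = (r-1)(d+2) = (r-1)((d+1)+1)$, so Topological Tverberg is available in $\R^{d+1}$ with exactly this $N$.

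Concretely, I would define the continuous map
\[
g\colon \Delta_N \longrightarrow \R^{d+1}, \qquad g(x) = (f(x),\, \dist(x, \Delta_N^{(k)})),
\]
and invoke Theorem~\ref{thm:topological_Tverberg} in dimension $d+1$ to extract $r$ pairwise disjoint faces $\sigma_1, \ldots, \sigma_r$ of $\Delta_N$ together with points $x_i \in \sigma_i$ satisfying $g(x_1) = \cdots = g(x_r)$. This simultaneously yields the desired coincidence $f(x_1) = \cdots = f(x_r)$ and the auxiliary equality $\dist(x_1, \Delta_N^{(k)}) = \cdots = \dist(x_r, \Delta_N^{(k)})$.

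The crux is then to promote this common distance to zero, which is where the bound on $k$ enters. I would argue by contradiction: if no $x_i$ lies in $\Delta_N^{(k)}$, let $\tau_i \subseteq \sigma_i$ be the inclusion-minimal face of $\Delta_N$ containing $x_i$, so that $\dim \tau_i \ge k+1$ and $\tau_i$ has at least $k+2$ vertices. Since the $\sigma_i$ are vertex-disjoint, so are the $\tau_i$, and counting vertices forces $r(k+2) \le N + 1 = (r-1)(d+2) + 1$, which rearranges to $rk \le (r-1)d - 1$ and contradicts the hypothesis $k \ge \lceil (r-1)d/r \rceil$. So at least one $\dist(x_i, \Delta_N^{(k)})$ vanishes, hence by the equality all of them do; each $x_i$ therefore lies in a face $\sigma'_i \subseteq \sigma_i$ of dimension at most $k$, and these pairwise disjoint faces $\sigma'_i \in \Delta_N^{(k)}$ satisfy $f(\sigma'_1) \cap \cdots \cap f(\sigma'_r) \ni f(x_1)$.

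The main obstacle is really just the dimension bookkeeping in that last step: one must check that the hypothesis on $k$ is exactly calibrated so the vertex count overflows $N+1$. Everything else—augmenting $f$ to $g$ by a distance-to-skeleton coordinate, and pruning the Tverberg faces to their minimal subface containing $x_i$—is routine once the right constraint function is in place.
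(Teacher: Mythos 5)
Your proposal is correct and follows essentially the same route as the paper: augment $f$ by the constraint coordinate $\dist(\,\cdot\,,\Delta_N^{(k)})$, apply the Topological Tverberg Theorem in $\R^{d+1}$ (which is exactly why $N=(r-1)(d+2)$), and rule out all faces leaving the $k$-skeleton by the same vertex count $r(k+2)\le N+1$ contradicting $k\ge\lceil\tfrac{r-1}{r}d\rceil$. The bookkeeping you flag as the crux is carried out identically in the paper, so no gap remains.
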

\begin{proof}
	Let $g\colon \Delta_N\longrightarrow\R^{d+1}$ be a continuous function defined by $g(x)=(f(x),\dist(x, \Delta_N^{(k)}))$.
	The Topological Tverberg Theorem applied to the function $g$ yields a collection of points $x_1,\dots,x_r$ in pairwise disjoint faces $\sigma_1,\dots,\sigma_r$ with $f(x_1) = \dots = f(x_r)$ and $\dist(x_1,\Delta_N^{(k)})=\dots=\dist(x_r,\Delta_N^{(k)})$.
    We can assume that the all $\sigma_i$'s are inclusion-minimal with the property that $x_i\in\sigma_i$, that is, $\sigma_i$ is the unique face with $x_i$ in its relative interior.
    Now, if one of the $\sigma_i$'s were in $\Delta_N^{(k)}$, then we would have
$\dist(x_1,\Delta_N^{(k)})=0$, implying that $\dist(x_1,\Delta_N^{(k)})=\dots=\dist(x_r,\Delta_N^{(k)})=0$, and consequently the theorem would be proved.
    
    Let us assume the contrary, that no $\sigma_i$ is in $\Delta_N^{(k)}$, i.e., $\dim\sigma_1\geq k+1,\dots,\dim\sigma_r\geq k+1$. 
    Since the faces $\sigma_1,\dots,\sigma_r$ are pairwise disjoint we have that
    \[
    N+1=|\Delta_N|\geq |\sigma_1|+\dots+|\sigma_r|\geq r(k+2)\geq r\big(\lceil \tfrac{r-1}rd\rceil+2\big)\geq (r-1)(d+2)+2=N+2.
    \]
    This is a contradiction. 
\end{proof}

The essential conclusion is that: 
\emph{If the Topological Tverberg Theorem holds for an integer~$r$ and dimension~${d+1}$,
 then the Generalized Van Kampen--Flores Theorem also holds for the same integer~$r$ and dimension~$d$}.
\smallskip

\noindent 
\textbf{Ingredient 2: Equivariant maps yield maps without points of \emph{r}-fold coincidence.} 
This is a highly nontrivial recent
result of Mabillard and Wagner. In order to state it we need 
the \emph{pairwise deleted $r$-fold product} of simplicial complex~$K$, defined as
\[
K^{\times r}_{\Delta(2)} = \{(x_1,\dots,x_r) \in \sigma_1 \times \dots \times \sigma_r \: | \: \sigma_i \text{ face of } K, \sigma_i \cap \sigma_j = \emptyset \text{ for } i \neq j\}.
\]
The space $K^{\times r}_{\Delta(2)}$ is a polytopal $\Sym_r$-complex: Its faces are products of simplices; the symmetric group acts on it by permuting factors.
Recall that $W_r$ was defined above as $\{(x_1, \dots, x_r) \in \R^r : \sum_{i=1}^r x_i = 0\}$,
with the $\Sym_r$-action permuting the coordinates.

\begin{theorem}[Mabillard \& Wagner {\cite[Thm.\,3]{Mabillard2014} \cite[Thm.\,7]{Mabillard2015}}]
	\label{thm:mabillard-wagner}
	Let $r\ge 2$ and $k \ge 3$ be integers, and let $K$ be a simplicial complex of dimension~$(r-1)k$. 
	Then the following statements are equivalent:
	\begin{compactenum}[\rm(i)]
		\item There exists an $\Sym_r$-equivariant map $K^{\times r}_{\Delta(2)} \longrightarrow S(W_r^{\oplus rk})$.
		\item There exists a continuous map $f \colon K \longrightarrow \R^{rk}$ such that for any $r$ pairwise disjoint faces $\sigma_1, \dots, \sigma_r$ of~$K$ we have that
		$f(\sigma_1) \cap \dots \cap f(\sigma_r) = \emptyset$.
	\end{compactenum}
\end{theorem}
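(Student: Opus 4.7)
The direction (ii)$\Rightarrow$(i) is essentially formal. Given $f\colon K\to\R^{rk}$ without $r$-fold coincidences among pairwise disjoint faces, the product map
\[
(x_1,\ldots,x_r)\longmapsto(f(x_1),\ldots,f(x_r))
\]
is a $\Sym_r$-equivariant continuous map from $K^{\times r}_{\Delta(2)}$ to $(\R^{rk})^r$ that avoids the thin diagonal by hypothesis; orthogonal projection onto its complement $W_r^{\oplus rk}$ followed by radial normalization yields the desired map to $S(W_r^{\oplus rk})$. I expect essentially all of the substance of the theorem to lie in the converse.

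For (i)$\Rightarrow$(ii) my plan is to follow the classical Van Kampen obstruction strategy in its $r$-fold equivariant form. First I would perturb any PL map $f_0\colon K\to\R^{rk}$ into general position. Since $\dim K=(r-1)k$, a codimension count forces each nonempty generic coincidence locus $f_0(\sigma_1)\cap\cdots\cap f_0(\sigma_r)$ of pairwise disjoint faces to come from top-dimensional faces $\sigma_i$ of dimension $(r-1)k$ and to consist of finitely many isolated points. Next I would assemble these points, with local ``equivariant signs'' coming from orientations of the faces and of $W_r^{\oplus rk}$, into an equivariant $0$-cocycle supported on the top cells of $K^{\times r}_{\Delta(2)}$. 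Standard obstruction theory should identify its equivariant cohomology class as the sole obstruction to the existence of an $\Sym_r$-equivariant map $K^{\times r}_{\Delta(2)}\to S(W_r^{\oplus rk})$, so hypothesis (i) provides a nullhomology expressing the cocycle as a signed sum of cancelling pairs.

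The main geometric work is then to realize this algebraic cancellation through an \emph{$r$-fold Whitney trick}: given two coincidence points of opposite equivariant sign joined by compatible arcs running through the participating faces, one modifies $f_0$ on a neighborhood of an equivariantly chosen family of embedded Whitney-type disks so that both coincidence points disappear and no new ones are introduced. Iterating this move finitely many times should produce a map satisfying (ii).

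This last step is where essentially all the difficulty lies, and where the hypothesis $k\ge 3$ becomes essential. Already in the classical case $r=2$ the Whitney trick requires ambient dimension at least $5$, i.e.\ $k\ge 3$; in the $r$-fold setting one must in addition arrange $r$ synchronously chosen Whitney disks $\Sym_r$-equivariantly, embed them, and keep them generically disjoint from every other face of $K$ and from the remaining coincidence points. The codimension bounds implied by $k\ge 3$ are precisely what should allow one to put all of these strata in general position; without them the disks generically acquire self-intersections or hit the wrong strata and the cancellation breaks down, which is the classical reason one would not expect an analogous statement in low codimension.
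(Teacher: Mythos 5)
This theorem is not proved in the paper at all: it is imported as a black box from Mabillard--Wagner (SoCG 2014 / arXiv:1508.02349), so there is no internal proof to compare against, and your proposal is in effect an attempt to reprove their main theorem. Your direction (ii)$\Rightarrow$(i) is complete and standard: the product map composed with orthogonal projection away from the thin diagonal and radial normalization does give the required $\Sym_r$-equivariant map to $S(W_r^{\oplus rk})$, and this is exactly how one argues.

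For (i)$\Rightarrow$(ii), however, what you have written is a plan whose decisive step is asserted rather than proved, and that step \emph{is} the theorem. Your outline (put a PL map in general position, read off the isolated $r$-fold points of $r$ pairwise disjoint $(r-1)k$-faces as an equivariant cocycle, identify its class with the sole obstruction to an equivariant map $K^{\times r}_{\Delta(2)} \to S(W_r^{\oplus rk})$, then cancel algebraically paired $r$-fold points geometrically) is indeed the Mabillard--Wagner strategy, but the ``$r$-fold Whitney trick'' you invoke in the last step is the content of their roughly 46-page paper, not a known lemma one may cite inside a proof of this statement. Concretely, what is missing is everything that makes the cancellation work: showing that two $r$-fold points of opposite sign can be joined by a suitable system of arcs and embedded Whitney-type disks in codimension $\ge 3$ (this is where $k\ge 3$ enters), that the ambient modification can be done without creating new $r$-fold points on pairwise \emph{disjoint} faces, and --- the genuinely new difficulty compared to the classical $r=2$ case --- how to deal with the lower-multiplicity intersections and intersections among non-disjoint faces that the disks and the finger/piping moves unavoidably meet. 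None of this is routine general position; it is exactly where earlier attempts stalled. So as a reconstruction of the intended architecture your proposal is on target, but as a proof it has a gap coextensive with the main geometric theorem it is meant to establish; within the present paper the correct move is simply to cite Mabillard--Wagner, as the authors do.
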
 

\noindent
\textbf{Ingredient 3: Constructing equivariant maps outside of the prime power case.} It is a simple corollary of the
work of \"Ozaydin~\cite{Oezaydin1987} that an $\Sym_r$-equivariant map $K^{\times r}_{\Delta(2)} \longrightarrow S(W_r^{\oplus d})$ exists
if the dimension gap $d-\dim K$ is sufficiently large and $r$ has at least two distinct prime divisors:
  
\begin{theorem}
	Let $r \ge 6$ be an integer that is not a prime power, and let $k \ge 3$ be an integer. 
	Let $K$ be a simplicial complex of dimension at most $(r-1)k$. Then 
	there exists an $\Sym_r$-equivariant map $K^{\times r}_{\Delta(2)} \longrightarrow S(W_r^{\oplus rk})$.
\end{theorem}

\begin{proof}
	Let $\mathrm{E}_{M}\Sym_r$ denote an $M$-dimensional free $\Sym_r$-complex that is $(M-1)$-connected.
	For example, $\mathrm{E}_{M}\Sym_r$ can be modeled by the $(M+1)$-fold join $(\Sym_r)^{*M+1}$ where, with usual abuse of notation,  
	$\Sym_r$ is viewed as
a zero-dimensional simplicial complex whose vertices are elements of the group~$\Sym_r$.
	The group action is given by multiplication from the left.
	(This is an instance of the Milnor construction for classifying spaces.)
	
	The free $\Sym_r$-space $K^{\times r}_{\Delta(2)}$ has dimension at most $d=r(r-1)k$.
	Consequently, there exists an $\Sym_r$-equivariant map $K^{\times r}_{\Delta(2)}\longrightarrow \mathrm{E}_{r(r-1)k}\Sym_r$.
	Since $r$ is not a prime power, a result of \"Ozaydin~\cite[Thm.\,4.2]{Oezaydin1987} implies the existence of an 
	$\Sym_r$-equivariant map $\mathrm{E}_{r(r-1)k}\Sym_r\longrightarrow S(W_r^{\oplus rk})$.
	Composing these two maps we get an $\Sym_r$-equivariant map $K^{\times r}_{\Delta(2)}\longrightarrow  S(W_r^{\oplus rk})$.
\end{proof}
  
\section{Counterexamples to the Topological Tverberg Conjecture}\label{sec:TTCfails}

A straightforward combination of Ingredient 3 and 
Ingredient 2 -- the theorem of Mabillard and Wagner~{--} shows that 
the Generalized Van Kampen--Flores Theorem does not hold in general:

\begin{theorem}
	\label{thm:counter_van_kampen_flores}
	Let $r \ge 6$ be an integer that is not a prime power, and let $k \ge 3$ be an integer. 
	Then for any simplicial complex $K$ of dimension at most $(r-1)k$ there exists a continuous map 
	$f\colon K \longrightarrow \R^{rk}$ such that for any $r$ pairwise disjoint faces 
	$\sigma_1, \dots, \sigma_r$ of~$K$ we have that $f(\sigma_1) \cap \dots \cap f(\sigma_r) = \emptyset$.
\end{theorem}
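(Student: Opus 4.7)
The plan is to apply Theorem~\ref{thm:mabillard-wagner} to the $(r-1)k$-skeleton of $\Delta_N$, supplying the required equivariant map via Özaydin's non-prime-power vanishing.

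First I set $K := \Delta_N^{((r-1)k)}$, a simplicial complex of dimension (at most) $(r-1)k$, so that the two hypotheses of Theorem~\ref{thm:mabillard-wagner}, namely $\dim K=(r-1)k$ and $k\ge 3$, are in place. The goal is to produce a continuous map $g\colon K \to \R^{rk}$ with $g(\sigma_1)\cap\cdots\cap g(\sigma_r)=\emptyset$ on every $r$-tuple of pairwise disjoint faces of $K$; extending $g$ arbitrarily to $f\colon \Delta_N \to \R^{rk}$ is then trivial, because $\R^{rk}$ is contractible and $K\subseteq \Delta_N$ is a subcomplex. Any $r$ pairwise disjoint faces of $\Delta_N^{((r-1)k)}$ lie already in $K$, so $f$ inherits the avoidance property from $g$.

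By the equivalence (i)$\iff$(ii) in Theorem~\ref{thm:mabillard-wagner}, constructing $g$ reduces to constructing an $\Sym_r$-equivariant map $\phi\colon K^{\times r}_{\Delta(2)} \to S(W_r^{\oplus rk})$. The domain is a free $\Sym_r$-CW complex whose cells are products of $r$ pairwise disjoint faces of $K$ of dimension at most $(r-1)k$, so $\dim K^{\times r}_{\Delta(2)}\le r(r-1)k$, independently of $N$. The target sphere $S(W_r^{\oplus rk})$ has dimension $r(r-1)k-1$ and is $(r(r-1)k-2)$-connected, so equivariant obstruction theory will build $\phi$ cellularly up to a single primary top-dimensional obstruction class. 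The decisive input is Özaydin's theorem~\cite{Oezaydin1987}: when $r$ is not a prime power, $\Sym_r$ has no elementary abelian $p$-subgroup acting transitively on $\{1,\ldots,r\}$, and Özaydin's localization-style argument forces this obstruction class to vanish on every free $\Sym_r$-CW complex of the relevant form.

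The main obstacle is this last step: Özaydin's theorem is classically phrased for the deleted product $(\Delta_M)^{\times r}_{\Delta(2)}$ with the balanced Tverberg value $M=(r-1)(rk+1)$, whereas here I need the $\Sym_r$-equivariant map on the skeleton-deleted product $K^{\times r}_{\Delta(2)}$. For $N\le M$ this is immediate by restricting Özaydin's map along the natural equivariant inclusion $K^{\times r}_{\Delta(2)}\hookrightarrow (\Delta_M)^{\times r}_{\Delta(2)}$. For $N>M$ I would either inspect Özaydin's argument directly---only the group $\Sym_r$, the representation $W_r^{\oplus rk}$, and a dimension bound on the free complex enter, not the specific shape of the complex---or construct a universal equivariant map from a sufficiently high skeleton of $\E\Sym_r$, into which $K^{\times r}_{\Delta(2)}$ embeds equivariantly.
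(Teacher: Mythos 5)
Your proposal is correct and follows essentially the same route as the paper: reduce via Theorem~\ref{thm:mabillard-wagner} to producing an $\Sym_r$-equivariant map $(\Delta_N^{((r-1)k)})^{\times r}_{\Delta(2)}\longrightarrow S(W_r^{\oplus rk})$, note that the domain has dimension at most $r(r-1)k$ independently of $N$, and invoke \"Ozaydin's non-prime-power result --- your third fallback (factoring through a finite-dimensional free complex $\mathrm{E}_{r(r-1)k}\Sym_r$, where one needs only an equivariant map, not an embedding) is exactly the paper's construction. The only cosmetic differences are that you spell out the trivial extension of $f$ from the skeleton to all of $\Delta_N$ and phrase \"Ozaydin's input in obstruction-theoretic language.
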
 
 
It is worth stressing that the theorem above provides counterexamples to an extension of 
the Generalized Van Kampen--Flores Theorem for any $r$ that is not a prime power, and for the $(r-1)k$-skeleton of the simplex $\Delta_N$ for \emph{any}~$N$.
Since the 
Generalized Van Kampen--Flores Theorem is a higher-dimensional, multi-intersection analogue
of ``$K_5$ is non-planar,'' in the sense that the case $r=2$ and $d=2$ reduces to this statement,
the theorem above implies that \emph{some} higher-dimensional, multi-intersection analogue
of the non-planarity of $K_5$ exists if and only if $r$ is a power of a prime. 

We have already pointed out that, if the Topological Tverberg Conjecture holds for some~$r$, then the Generalized Van Kampen--Flores Theorem holds for the same~$r$. 
Since Theorem~\ref{thm:counter_van_kampen_flores} contradicts the $r$-fold Van Kampen--Flores Theorem for $r$ not a power of a prime, the Topological Tverberg Conjecture must also fail for those~$r$.

\begin{theorem}
	Let $r \ge 6$ be an integer that is not a prime power, and let $k \ge 3$ be an integer. 
	Let $N =(r-1)(rk+2)$. 
	Then there exists a continuous map $F \colon \Delta_N \longrightarrow \R^{rk+1}$ such that for any $r$ pairwise disjoint faces $\sigma_1, \dots, \sigma_r$ of~$\Delta_N$ we have that $F(\sigma_1) \cap \dots \cap F(\sigma_r) = \emptyset$.
\end{theorem}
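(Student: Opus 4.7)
The plan is to apply the constraint method, exactly as in the proof of Theorem~\ref{thm:genrelized_van_Kampen_Flores}, but in reverse: we use the failure of the Generalized Van Kampen--Flores statement (Theorem~\ref{thm:counter_van_kampen_flores}) as the input, and extract from it the failure of Topological Tverberg. More precisely, I would start by invoking Theorem~\ref{thm:counter_van_kampen_flores} with the given $r$ and $k$ and with $N=(r-1)(rk+2)$, to obtain a continuous map $f\colon \Delta_N \longrightarrow \R^{rk}$ such that $f(\sigma_1)\cap\dots\cap f(\sigma_r)=\emptyset$ for any $r$ pairwise disjoint faces $\sigma_1,\dots,\sigma_r$ of the $((r-1)k)$-skeleton $\Delta_N^{((r-1)k)}$.

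Next, I would define the candidate counterexample by appending a distance-to-skeleton coordinate, setting
\[
F\colon \Delta_N \longrightarrow \R^{rk+1},\qquad F(x) = \bigl(f(x),\ \dist(x,\Delta_N^{((r-1)k)})\bigr).
\]
Suppose for contradiction that there are pairwise disjoint faces $\sigma_1,\dots,\sigma_r$ of $\Delta_N$ and points $x_i\in\sigma_i$ with $F(x_1)=\dots=F(x_r)$; replacing each $\sigma_i$ by the unique face having $x_i$ in its relative interior, I may assume that each $\sigma_i$ is inclusion-minimal. The agreement of last coordinates gives $\dist(x_i,\Delta_N^{((r-1)k)})=c$ for a common value $c\ge 0$, and the first $rk$ coordinates give $f(x_1)=\dots=f(x_r)$.

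Then I would split into two cases. If $c=0$, every $x_i$ lies in $\Delta_N^{((r-1)k)}$, so by minimality each $\sigma_i$ lies in $\Delta_N^{((r-1)k)}$ as well, and the points $x_i$ witness an intersection $f(\sigma_1)\cap\dots\cap f(\sigma_r)\ne\emptyset$ among disjoint faces of the skeleton, contradicting Theorem~\ref{thm:counter_van_kampen_flores}. If $c>0$, then $\dim\sigma_i\ge(r-1)k+1$ for every $i$, and the disjointness dimension count yields
\[
N+1 \ge |\sigma_1|+\dots+|\sigma_r| \ge r\bigl((r-1)k+2\bigr) = r(r-1)k + 2r,
\]
while $N+1 = (r-1)(rk+2)+1 = r(r-1)k + 2r - 1$, a direct contradiction.

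The only delicate point is lining up the numerology so that both cases produce contradictions simultaneously, which is precisely why $N$ has been chosen as $(r-1)(rk+2)$: this is the smallest value for which the dimension-count in Case~2 just barely fails, while still being large enough for Theorem~\ref{thm:counter_van_kampen_flores} to supply the map~$f$ needed in Case~1. Since the theorem of Mabillard and Wagner enters only through Theorem~\ref{thm:counter_van_kampen_flores}, no further topology is needed: the argument is a direct one-line application of the constraint method, mirroring Section~\ref{sec:GeneralizedVKF} but run in the opposite direction.
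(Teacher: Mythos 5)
Your proposal is correct and follows essentially the same route as the paper: invoke Theorem~\ref{thm:counter_van_kampen_flores} for $N=(r-1)(rk+2)$, append the coordinate $\dist(x,\Delta_N^{((r-1)k)})$ to form $F$, and derive a contradiction either because all faces lie in the skeleton (contradicting the property of $f$) or because faces of dimension $\ge (r-1)k+1$ would need $r((r-1)k+2)=N+2>N+1$ vertices. The only difference is cosmetic (you organize the argument as a case split on the common distance $c$, while the paper runs the vertex count first), so nothing further is needed.
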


\begin{proof}
	Take $K$ to be the $(r-1)k$-skeleton of~$\Delta_N$. Then Theorem~\ref{thm:counter_van_kampen_flores} guarantees the existence
	of a map $f \colon K \longrightarrow \R^{rk}$ without a point of $r$-fold coincidence among its pairwise disjoint faces. There is no
	obstruction to continuously extending the map $f$ to the entire simplex~$\Delta_N$. Thus $f$ shows that the Generalized Van Kampen--Flores
	Conjecture, that is, the statement that Theorem~\ref{thm:genrelized_van_Kampen_Flores} holds beyond the prime power case, 
	does not hold for $r$ and dimension~$rk$. By the essential conclusion of Ingredient 1 this yields a counterexample $F$ to the 
	Topological Tverberg Conjecture for intersection multiplicity~$r$ and in dimension~${rk+1}$. 
\end{proof}

If the Topological Tverberg Conjecture holds for $r$ pairwise disjoint faces and dimension~$d+1$, 
then it also holds for dimension~$d$ and the same number of faces. This is a simple fact that follows easily
from the constraint method, but was also pointed out earlier by de~Longueville~\cite[Prop.~2.5]{DeLongueville2002}. 
In fact we will show stronger dimension reduction results in the next section.
Thus, we are only interested in low-dimensional counterexamples. If 
$r$ is not a prime power then the Topological Tverberg Conjecture fails for 
dimensions~$3r+1$ and above. Hence, the smallest counterexample our
construction yields is a continuous map $\Delta_{100} \longrightarrow \R^{19}$ such 
that any six pairwise disjoint faces have images that do not intersect in a 
common point.

In subsequent work, Mabillard \& Wagner \cite{Mabillard2015} constructed counterexamples in dimension~$3r$ with a new method, called 
\emph{prismatic maps}.  
Avvakumov, Mabillard, Skopenkov \& Wagner \cite{MabillardWagner-III} further improved this, with other methods, to
get counterexamples in dimension $2r$; in particular, their smallest 
counterexample is a map $\Delta_{65} \longrightarrow \R^{12}$. 

\begin{remark}
	Theorem~\ref{thm:counter_van_kampen_flores} immediately implies that the colored Tverberg theorem of ``type B'' 
	by Vre\'cica~\& \v{Z}ivaljevi\'c~\cite{Vrecica1994} fails for any number of faces $r$ 
	that is not a prime power. 
	They proved that any continuous map $f\colon [2r-1]^{*s} \longrightarrow \R^d$ has an $r$-fold
	Tverberg point for $s \ge \frac{r-1}{r}d + 1$ and $r$ a prime power. This result is colored in the sense
	that the discrete sets $[2r-1]$ are thought of as color classes and thus intersecting faces do not have
	monochromatic edges. (The case $r=2$ and $d=2$ implies the non-planarity of~$K_{3,3}$.) 
	A result is of ``type B'' in the language of Vre\'cica~\& \v{Z}ivaljevi\'c if the complex
	has dimension less than~$d$. It is this codimension requirement that guarantees that 
	Theorem~\ref{thm:counter_van_kampen_flores} may be applied:
	For $d = rk$ with $k \ge 3$ and $s = (r-1)k+1$ the complex $[2r-1]^{*s}$ has dimension~$s-1=(r-1)k\le d-3$. 
	
	The constraint method of~\cite{Blagojevic2014} yields a combinatorial reduction of this result to the 
	Topological Tverberg Theorem like in the case of the Generalized Van Kampen--Flores result. 
	Thus the refutation of this
	result also implies that the Topological Tverberg Conjecture fails outside the prime power case.
\end{remark}

\section{How badly does the Topological Tverberg Conjecture fail?}\label{sec:failure}

Let $N_r(d)$ be the minimal integer~$N$ such that for any continuous map $f\colon \Delta_N \longrightarrow \R^d$ there are $r$ pairwise disjoint faces $\sigma_1, \dots, \sigma_r$ with the property that $f(\sigma_1) \cap \dots \cap f(\sigma_r) \ne \emptyset$. 
So far we know that for $r$ a prime power $N_r(d) = (r-1)(d+1)$, and for $r$ not a prime power and $d$ sufficiently large $N_r(d) > (r-1)(d+1)$. 
In the following 
we establish some upper and lower bounds on the function~$N_r$.

\begin{theorem}
 Let $r \ge 2$ and $d \ge 1$ be integers. 
 Let $q \ge r$ be a prime power, and let $N=(q-1)(d+1)$\break$-(q-r) = (q-1)d+r-1$. 
 Then for any continuous map $f\colon \Delta_N \longrightarrow \R^d$ there are $r$ pairwise disjoint faces $\sigma_1, \dots, \sigma_r$ with $f(\sigma_1) \cap \dots \cap f(\sigma_r) \ne \emptyset$.
\end{theorem}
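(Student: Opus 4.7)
The plan is a clean application of the constraint method that uses the Topological Tverberg Theorem for the prime power $q$ to derive the weaker $r$-fold conclusion for the original dimension $d$. The key arithmetic observation is that $N+(q-r)=(q-1)(d+1)$, so if I set $N'=(q-1)(d+1)$ then $\Delta_{N'}$ can be viewed as $\Delta_N$ together with $q-r$ additional vertices $v_1,\dots,v_{q-r}$.

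First I would extend the given map $f\colon\Delta_N\to\R^d$ to a continuous map $F\colon\Delta_{N'}\to\R^d$ in an arbitrary way, for instance by composing $f$ with the retraction $\Delta_{N'}\to\Delta_N$ that sends each extra vertex $v_j$ to a fixed vertex of $\Delta_N$ and is extended linearly on each face. By construction, $F$ agrees with $f$ on $\Delta_N$.

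Next I would invoke the Topological Tverberg Theorem (Theorem~\ref{thm:topological_Tverberg}) applied to $F$ with parameters $q$ and $d$, which is admissible since $q$ is a prime power and $N'=(q-1)(d+1)$ is precisely the Tverberg number. This produces $q$ pairwise disjoint faces $\tau_1,\dots,\tau_q$ of $\Delta_{N'}$ and a common image point $p\in F(\tau_1)\cap\dots\cap F(\tau_q)$.

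The last step is a short pigeonhole argument: since the faces $\tau_1,\dots,\tau_q$ are pairwise vertex-disjoint and only $q-r$ vertices of $\Delta_{N'}$ lie outside $\Delta_N$, at most $q-r$ of the $\tau_j$ can use any of these extra vertices. Hence at least $r$ of them, say $\tau_{i_1},\dots,\tau_{i_r}$, are entirely contained in $\Delta_N$; on those faces $F$ coincides with $f$, so $p\in f(\tau_{i_1})\cap\dots\cap f(\tau_{i_r})$, as required. I anticipate no serious obstacle here: the entire argument rests on the arithmetic identity $N+(q-r)=(q-1)(d+1)$ and the trivial pigeonhole bound, and the freedom to extend $f$ continuously is automatic because $\Delta_N$ is a retract of $\Delta_{N'}$.
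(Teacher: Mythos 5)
Your proposal is correct and follows essentially the same route as the paper: extend $f$ over the larger simplex $\Delta_{(q-1)(d+1)}$, apply the Topological Tverberg Theorem for the prime power $q$, and use the pigeonhole observation that only $q-r$ vertices lie outside $\Delta_N$, so at least $r$ of the $q$ disjoint faces stay inside $\Delta_N$ where the extension agrees with $f$. Your explicit retraction just makes concrete the extension step that the paper leaves implicit.
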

\begin{proof}
	Let $M = (q-1)(d+1)$ and think of~$\Delta_N$ as a subcomplex of~$\Delta_M$. 
	Extend $f$ to a continuous map $F\colon \Delta_M \longrightarrow \R^d$. 
	By the Topological Tverberg Theorem there are $q$ pairwise disjoint faces $\sigma_1, \dots, \sigma_q$ of~$\Delta_M$ with $F(\sigma_1) \cap \dots \cap F(\sigma_q) \ne \emptyset$. 
	Only $q-r$ vertices of~$\Delta_M$ are not contained in~$\Delta_N$, so at least $r$ of the faces $\sigma_1, \dots, \sigma_q$ are contained in~$\Delta_N$.
\end{proof}

\noindent
Let $r \ge 6$ be an integer. 
By Bertrand's postulate there is a prime strictly between $r-1$ and~$2r-4$, so  
\[N_r(d) \le (2r-6)(d+1).\] 
There are of course much more precise estimates available: For example, for any
$\varepsilon>0$ and sufficiently large $r$ (depending on $\varepsilon$) 
there is a prime between $r$ and $(1+\varepsilon)r$, so
\[N_r(d) \le (1+\varepsilon)r(d+1).\]
We refer to Lou \& Yao \cite{LouYao} for even stronger bounds.

Next we investigate the asymptotics of the function $\tfrac{N_r(d)}{d}$ for $d \to \infty$.

\begin{lemma}
\label{lem}
Let $d \ge 1$, $r \ge 2$ and $k \ge 2$ be integers, and let $N \ge (r-1)(d+1)$. 
Suppose that for every continuous map $F\colon \Delta_{k(N+1)-1} \longrightarrow \R^{k(d+1)-1}$ there are $r$ pairwise disjoint faces $\sigma_1, \dots, \sigma_r$ of $\Delta_{k(N+1)-1}$ with $F(\sigma_1) \cap \dots \cap F(\sigma_r) \ne \emptyset$.
Then for any continuous map $f\colon \Delta_N \longrightarrow \R^d$ there are $r$ pairwise disjoint faces $\tau_1, \dots, \tau_r$ of $\Delta_{N}$ such that $f(\tau_1) \cap \dots \cap f(\tau_r) \ne \emptyset$.
\end{lemma}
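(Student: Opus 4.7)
The plan is to reduce the statement for $f\colon\Delta_N\to\R^d$ to the assumed statement in higher dimension via a join construction. Identify the simplex $\Delta_{k(N+1)-1}$ with the $k$-fold join $(\Delta_N)^{*k}$, by partitioning its $k(N+1)$ vertices into $k$ groups of $N+1$ vertices, each group spanning one copy of $\Delta_N$. Each point $y\in\Delta_{k(N+1)-1}$ then admits a unique representation $y=\sum_{i=1}^k t_i x_i$, where $t_i\ge0$, $\sum t_i=1$, and $x_i$ lies in the $i$-th copy of $\Delta_N$ whenever $t_i>0$. A face $\sigma$ of $\Delta_{k(N+1)-1}$ decomposes correspondingly as a join $\sigma=\sigma^{(1)}*\cdots*\sigma^{(k)}$ of (possibly empty) faces of the $k$ copies of $\Delta_N$, and two faces are disjoint in $\Delta_{k(N+1)-1}$ if and only if they are disjoint in each join factor.

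Given $f\colon\Delta_N\to\R^d$, I would define a continuous map
\[
F\colon\Delta_{k(N+1)-1}\longrightarrow \R^{k(d+1)-1}=\R^{kd}\times\R^{k-1}
\]
by
\[
F\bigl(\textstyle\sum_{i=1}^k t_i x_i\bigr)=\bigl(t_1 f(x_1),\dots,t_k f(x_k),\,t_1,\dots,t_{k-1}\bigr).
\]
The coefficient $t_k=1-t_1-\cdots-t_{k-1}$ is recorded implicitly, so $F$ encodes both the values of $f$ on each join factor (scaled by the join coefficient) and the join coefficients themselves. The map is continuous because when $t_i\to0$ the product $t_i f(x_i)$ vanishes regardless of $x_i$.

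Next, I apply the hypothesis of the lemma to $F$, obtaining $r$ pairwise disjoint faces $\sigma_1,\dots,\sigma_r$ of $\Delta_{k(N+1)-1}$ and points $y_j\in\sigma_j$ with $F(y_1)=\cdots=F(y_r)$. Writing $y_j=\sum_i s^{(i)}_j z^{(i)}_j$ in the join decomposition, the last $k-1$ coordinates of $F$ force $s^{(i)}_1=\cdots=s^{(i)}_r=:s^{(i)}$ for $i=1,\dots,k-1$, and hence also for $i=k$. Choose any index $i$ with $s^{(i)}>0$, which exists since $\sum_i s^{(i)}=1$. Equality of the $i$-th block of the first $kd$ coordinates of $F$ then gives $s^{(i)}f(z^{(i)}_1)=\cdots=s^{(i)}f(z^{(i)}_r)$, and dividing by $s^{(i)}$ yields $f(z^{(i)}_1)=\cdots=f(z^{(i)}_r)$. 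Because $s^{(i)}_j=s^{(i)}>0$, each $\sigma^{(i)}_j$ is a nonempty face of $\Delta_N$ containing $z^{(i)}_j$, and disjointness of the $\sigma_j$ in $\Delta_{k(N+1)-1}$ implies disjointness of $\sigma^{(i)}_1,\dots,\sigma^{(i)}_r$ in $\Delta_N$. Setting $\tau_j:=\sigma^{(i)}_j$ produces the required Tverberg partition for $f$.

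The bookkeeping around empty join factors is the only slightly subtle point: one must make sure that the extracted faces $\tau_j$ are genuinely nonempty subfaces of $\Delta_N$ and pairwise disjoint. Both properties are automatic once one restricts attention to an index $i$ with $s^{(i)}>0$, since then the support conditions and the join disjointness pass down directly. The hypothesis $N\ge(r-1)(d+1)$ is not used in the argument itself; it simply ensures that the hypothetical statement is not vacuous, and $k(N+1)-1\ge(r-1)(k(d+1)-1+1)$ so that the Tverberg statement in the higher dimension is in the relevant range.
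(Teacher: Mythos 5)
Your proposal is correct and follows essentially the same route as the paper: identify $\Delta_{k(N+1)-1}$ with the $k$-fold join $(\Delta_N)^{*k}$, apply the hypothesis to the map $F(\sum_i t_i x_i)=(t_1,\dots,t_{k-1},t_1f(x_1),\dots,t_kf(x_k))$ (yours differs only by a permutation of coordinates), and extract a join factor with positive coefficient to pass the coincidence back to $f$. Your added care about continuity of $F$ and about empty join factors is fine but not a departure from the paper's argument.
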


\begin{proof}
	Let $f\colon\Delta_N\longrightarrow\R^d$ be a continuous map.
	The simplex $\Delta_{k(N+1)-1}$ is isomorphic to the $k$-fold join~$(\Delta_N)^{*k}$. 
	Define $F\colon (\Delta_N)^{*k} \longrightarrow \R^{k(d+1)-1}$ by $$F(\lambda_1x_1 + \dots + \lambda_kx_k) = (\lambda_1, \dots, \lambda_{k-1}, \lambda_1f(x_1), \dots, \lambda_kf(x_k)).$$ 
	Then there are $r$ pairwise disjoint faces $\sigma_1, \dots, \sigma_r$ of~$\Delta_{k(N+1)-1}$ with 
	$F(\sigma_1) \cap \dots \cap F(\sigma_r) \ne \emptyset$. 
	Let $\lambda_1^{(i)}x_1^{(i)}+ \dots + \lambda_r^{(i)}x_r^{(i)} \in \sigma_i$ with $F(\lambda_1^{(1)}x_1^{(1)}+ \dots + \lambda_r^{(1)}x_r^{(1)}) = \dots = F(\lambda_1^{(r)}x_1^{(r)}+ \dots + \lambda_r^{(r)}x_r^{(r)})$. 
	Thus, $\lambda_j^{(1)} = \dots = \lambda_j^{(r)}$ for all $j=1, \dots, k$, where we have equality for the $\lambda_k^{(i)}$ since $\lambda_k^{(i)} = 1- \sum_{j=1}^{k-1} \lambda_j^{(i)}$. 
	
	At least one $\lambda_j^{(1)}$ is nonzero because $\sum_j \lambda_j^{(1)} = 1$. 
	Then since $\lambda_j^{(1)} = \dots = \lambda_j^{(r)}$ we have $f(x_j^{(1)}) = \dots = f(x_j^{(r)})$ and 
	the points $x_j^{(1)},\dots,x_j^{(r)}$ come from pairwise disjoint faces in~$\Delta_N$. 
\end{proof}

Define $\beta_r(d) = N_r(d) - (r-1)(d+1) \ge 0$. The function~$\beta_r$ measures to 
which extent the Topological Tverberg Conjecture fails in dimension~$d$ for a 
fixed~$r$. 
We show that if the Topological Tverberg Conjecture fails, then it fails at least linearly in~$d$.

\begin{corollary}
\label{cor:red}
 For any $r \ge 2$ and $d, k \ge 1$ we have that $N_r(k(d+1)-1) \ge kN_r(d)$. 
 Moreover, $\beta_r(k(d+1)-1) \ge k\beta_r(d)$.  
 This can be written as $\beta_r(d) \ge \frac{\beta_0}{d_0+1}(d+1)$ for $d = k(d_0+1)-1$ and $\beta_0 = \beta_r(d_0)$ for some fixed~$d_0 \ge 1$.
\end{corollary}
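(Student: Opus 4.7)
The plan is to use the contrapositive of Lemma~\ref{lem}: a continuous map $f\colon\Delta_N\to\R^d$ that avoids every $r$-fold Tverberg partition produces, via the join-type construction in that proof, a continuous map $\Delta_{k(N+1)-1}\to\R^{k(d+1)-1}$ also avoiding every $r$-fold Tverberg partition. The main inequality $N_r(k(d+1)-1)\ge k N_r(d)$ will then fall out of taking $N=N_r(d)-1$, and the remaining claims will be immediate algebraic rearrangements of the definition $\beta_r(d)=N_r(d)-(r-1)(d+1)$.

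First I would establish the main inequality. Assume $\beta_r(d)\ge1$, so that $N:=N_r(d)-1$ satisfies $N\ge(r-1)(d+1)$. By minimality of $N_r(d)$, some continuous map $\Delta_N\to\R^d$ has no $r$-fold Tverberg partition; applying the contrapositive of Lemma~\ref{lem} produces a continuous map $\Delta_{k(N+1)-1}\to\R^{k(d+1)-1}$ with the same property, whence $N_r(k(d+1)-1)\ge k(N+1)=k N_r(d)$. In the edge case $\beta_r(d)=0$ one has $N_r(d)=(r-1)(d+1)$, and the desired inequality $N_r(k(d+1)-1)\ge (r-1)k(d+1)=k N_r(d)$ follows from the elementary lower bound $N_r(D)\ge(r-1)(D+1)$ (sharpness of affine Tverberg, witnessed by any generic affine map) applied with $D=k(d+1)-1$.

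For the $\beta$-version, I would subtract the quantity $(r-1)k(d+1)=(r-1)((k(d+1)-1)+1)$ from both sides of the main inequality, turning it into $\beta_r(k(d+1)-1)\ge k\,\beta_r(d)$. Specializing to $d=k(d_0+1)-1$ with $\beta_0=\beta_r(d_0)$, we have $k=(d+1)/(d_0+1)$, so that this last inequality rewrites immediately as $\beta_r(d)\ge \frac{\beta_0}{d_0+1}(d+1)$, the stated linear lower bound.

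The only mildly subtle point is the edge case $\beta_r(d)=0$: the hypothesis $N\ge(r-1)(d+1)$ of Lemma~\ref{lem} is violated at $N=N_r(d)-1$, so the lemma cannot be invoked. This is handled by the trivial fallback that $N_r(D)\ge(r-1)(D+1)$ always holds, under which the main inequality in this case becomes automatic; so no real obstacle remains and the argument is essentially a one-step application of Lemma~\ref{lem} together with bookkeeping.
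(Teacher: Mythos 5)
Your proof is correct and follows essentially the same route as the paper: lift a counterexample map at $N = N_r(d)-1$ via the join construction of Lemma~\ref{lem}, conclude $N_r(k(d+1)-1)\ge k(N+1)=kN_r(d)$, and then rearrange to get the statements about $\beta_r$. The only difference is cosmetic: the paper reuses the construction from the lemma's \emph{proof} directly (where the hypothesis $N\ge(r-1)(d+1)$ plays no role), so your separate treatment of the edge case $\beta_r(d)=0$ via the lower bound $N_r(D)\ge(r-1)(D+1)$ -- while valid, and consistent with the paper's definition of $\beta_r\ge 0$ -- is not needed.
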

\begin{proof}
 Let $N = N_r(d)-1$, and let $f\colon \Delta_N \longrightarrow \R^d$ be a continuous such that for every collection of $r$ pairwise disjoint faces $\tau_1, \dots, \tau_r$ of $\Delta_{N}$ the intersection $f(\tau_1) \cap \dots \cap f(\tau_r) = \emptyset$ vanishes.

 Construct the function $F\colon (\Delta_N)^{\ast k} \longrightarrow \R^{k(d+1)-1}$ as in the proof of Lemma~\ref{lem}. 
 Then for every collection of $r$ pairwise disjoint faces $\sigma_1, \dots, \sigma_r$ in $(\Delta_N)^{\ast k}$ the intersection $F(\sigma_1) \cap \dots \cap F(\sigma_r) = \emptyset$ also vanishes.
 Therefore, $N_r(k(d+1)-1)$ is at least one larger than the dimension of $(\Delta_N)^{\ast k}$, that is, $N_r(k(d+1)-1) \ge k(N+1) = kN_r(d)$. 
 
 Moreover, 
 \[
 N_r(k(d+1)-1) \ge kN_r(d) = (r-1)(kd+k)+k\beta_r(d)
 \]
 and consequently $\beta_r(k(d+1)-1) \ge k\beta_r(d)$. 
 Now, $d = k(d_0+1)-1$ implies $k = \frac{d+1}{d_0+1}$. 
 Thus, $\beta_r(d) = \beta_r(k(d_0+1)-1) \ge k\beta_r(d_0) = \frac{\beta_0}{d_0+1}{d+1}$.
\end{proof}

\begin{theorem}
 Let $r \ge 2$. Suppose there is a $d_0 \ge 1$ for which $N_r(d_0) \ge \alpha_r \cdot (d_0+1)+c$ 
 for some $\alpha_r \ge r-1$ and~$c \ge 0$. Then $N_r(d) \ge (\alpha_r+\frac{c}{d_0+1})\cdot (d+1)$ 
 for $d = k(d_0+1)-1$ with~$k \in \{1,2,\dots\}$. 
\end{theorem}

\begin{proof}
 We have that $\beta_0 \ge (\alpha_r-(r-1))(d_0+1)+c$. Thus, by Corollary~\ref{cor:red} 
 \[
 \beta_r(d) \ge (\alpha_r-(r-1)+\frac{c}{d_0+1})\cdot (d+1)
 \] 
 for $d = k(d_0+1)-1$. 
 This implies that $N_r(d) \ge (\alpha_r+\frac{c}{d_0+1})\cdot (d+1)$.
\end{proof}

Let us conclude with the strongest plausible statement that remains.
Its refutation (or proof) would require methods that are different 
from the ones employed in~\cite{Mabillard2015} and \cite{MabillardWagner-III} 
or in the manuscript at hand.

\begin{conjecture}
	Let $r\ge 2$ and $d\ge 1$. Then
	\[
	N_r(d)\ = \
	\begin{cases}
		(r-1)(d+1), & \text{if }r\text{ is a power of a prime or }d \le r,\\
		r(d+1)-1,   & \text{otherwise}.
	\end{cases} 
	\]
\end{conjecture}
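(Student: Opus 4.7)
The plan is to split the conjecture into three regimes and attack matching upper and lower bounds in each. In regime~(i), where $r$ is a prime power, the upper bound $N_r(d)\le(r-1)(d+1)$ is Theorem~\ref{thm:topological_Tverberg}, and the matching lower bound is witnessed by $(r-1)(d+1)$ points in general position on a moment curve in $\R^d$. The difficulty lies entirely with the two non-prime-power regimes.

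For regime~(iii), where $r$ is not a prime power and $d>r$, the upper bound $N_r(d)\le r(d+1)-1$ follows at once from the first theorem of Section~\ref{sec:failure} applied with $q=r+1$, \emph{provided} $r+1$ is itself a prime power; this covers $r=6,10,12,15,18,22,\dots$. For the remaining values (e.g.\ $r=14$, where the smallest prime power above~$r$ is $q=16$, or $r=20$, where $q=23$), the resulting bound overshoots the conjectured value by $(q-r-1)d$. Closing this gap would require either combining several prime-power reductions via a constraint argument, or a new join-type construction in the spirit of Lemma~\ref{lem} that exploits prime power divisors of integers near~$r$. For the matching lower bound, Theorem~\ref{thm:counter_van_kampen_flores} at the base case $k=3$ gives $\beta_r(3r+1)\ge 1$, and Corollary~\ref{cor:red} boosts this to the linear estimate $\beta_r(d)\ge(d+1)/(3r+2)$. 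Reaching the conjectural $\beta_r(d)=d+1$ demands sharpening the initial example by a factor of $\Theta(r)$, which I would attempt by layering several simultaneous constraint coordinates, each encoding a distance to a distinct auxiliary subcomplex and each buying one extra source vertex.

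The main obstacle will be regime~(ii): proving that the Topological Tverberg theorem persists for non-prime-power $r$ whenever $d\le r$. In this range the Mabillard--Wagner $r$-fold Whitney trick is structurally unavailable, since their setup requires target dimension $d=rk$ with $k\ge 3$, forcing $d\ge 3r$ on the target side; thus no currently known source of counterexamples applies. But absence of known counterexamples does not yield a positive theorem: one would need a new equivariant non-existence result for $\Sym_r$-maps $(\Delta_N)^{\times r}_{\Delta(2)}\to S(W_r^{\oplus d})$ that works for non-prime-power~$r$ \emph{without} relying on fixed points of elementary abelian subgroups, i.e.\ a higher $\Sym_r$-equivariant cohomological obstruction that vanishes on all prime-power fixed point sets yet survives in low target dimension. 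I expect this to be by far the hardest part of the conjecture; even the single case $r=6$, $d=6$, predicting $N_6(6)=35$, is currently wide open.
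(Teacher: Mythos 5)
This statement is a \emph{conjecture}: the paper does not prove it, and explicitly expects that a resolution would require methods beyond those of \cite{Mabillard2015} and of the paper itself. Your proposal, read as a proof, therefore has gaps that are not technical but constitute the entire content of the conjecture. Concretely: (a) Regime (ii) ($r$ not a prime power, $d\le r$) is, as you yourself admit, completely open; nothing in the paper or in your plan produces the needed equivariant obstruction (or any other mechanism) for non-prime-power $r$ in low dimensions, so no proof exists here, only a correct diagnosis of difficulty. (b) In regime (iii) the lower bound is far from established: the known counterexamples require $d\ge 3r+1$ (target $\R^{rk+1}$, $k\ge3$, plus dimension reduction), so for $r<d\le 3r$ not even $\beta_r(d)\ge 1$ is known, and for large $d$ the best bound the paper's Corollary~\ref{cor:red} yields from the base case is $\beta_r(d)\ge (d+1)/(3r+2)$ along an arithmetic progression of dimensions, whereas the conjecture demands $\beta_r(d)=d$ (note your target value $d+1$ is off by one: $r(d+1)-1-(r-1)(d+1)=d$). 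Your suggestion of ``layering several constraint coordinates'' to gain the missing factor $\Theta(r)$ is not worked out and is precisely where a new idea would be needed; each added distance coordinate raises the target dimension, and it is not clear any bookkeeping of this kind closes the gap. (c) Even the upper bound in regime (iii) is only obtained when $r+1$ is a prime power; for $r=14,20,\dots$ your plan names the problem (combine several prime-power reductions or a new join construction) without solving it.

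What you have correctly reconstructed is exactly the paper's partial knowledge surrounding the conjecture: Theorem~\ref{thm:topological_Tverberg} plus the classical affine lower bound (which, as a caution, needs points in \emph{strongly} general position, not merely general position on a moment curve) settles the prime-power case; the first theorem of Section~\ref{sec:failure} with a prime power $q\ge r$ gives the upper bounds; and Theorem~\ref{thm:counter_van_kampen_flores} with its corollary and Corollary~\ref{cor:red} give the known linear-in-$d$ failure. So your text is an accurate survey with a sound decomposition into regimes, but it is a research program, not a proof, and should not be presented as establishing the stated equality.
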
  

\small
\providecommand{\noopsort}[1]{}
\providecommand{\href}[2]{#2}

\end{document}